\theoremstyle{plain}
\newtheorem{theo}{Theorem}
\newtheorem{define}[theo]{Definition}
\newtheorem{lemma}[theo]{Lemma}
\newcommand{\tr}{\mathrm{tr}}
\newcommand{\vol}{\mathrm{vol}}
\newcommand{\Tr}{\mathrm{Tr}}
\newcommand{\TN}{\mathrm{TN}}
\newcommand{\cE}{\mathcal{E}}
\newcommand{\calS}{\mathcal{S}}
\newcommand{\cD}{\mathcal{D}}
\newcommand{\cM}{\mathcal{M}}
\newcommand{\frA}{\mathfrak{A}}
\newcommand{\TaubN}{\mathrm{TN}}
\newcommand{\cA}{\mathcal{A}}
\newcommand{\bbR}{\mathbb{R}}
\newcommand{\IND}{\mathrm{Ind}}
\newcommand{\ev}{\mathrm{ev}}
\newcommand{\p}{\partial}
\title{\textbf{ An Application of the Index Theorem for Manifolds with Fibered Boundaries}}
\author{Andr\'es Larra\'in-Hubach\footnote{Part of this work was completed while the author was supported by a Research Council Seed Grant from the University of Dayton.}\\University of Dayton\footnote{alarrainhubach1@udayton.edu}}
\begin{document}
\maketitle
\begin{abstract}
We show how the index formula for manifolds with fibered boundaries of \cite{LMP} can be used to compute the index of the Dirac operator on Taub-NUT space twisted by an anti-self-dual generic instanton connection. \end{abstract}
\section{Introduction}
This note derives the $L^2$-index theorem, first proved in \cite{ChLaSt}, for a Dirac operator twisted by an Anti-Self-Dual (ASD) generic instanton on Taub-NUT space from the index formula on manifolds with fibered boundaries \cite{LMP}. The original motivation for this index problem was to establish the completeness of the bow construction of  Sergey Cherkis \cite{C1}. This construction conjectured an isometry between the moduli space of bow representations and the moduli space of generic ASD-instantons on Taub-NUT space. In  \cite{ChLaSt},  \cite{ChLaSt2} and  \cite{ChLaSt3} the isometry between these moduli spaces is proved. 

An important technical step in verifying this conjecture  is the index theorem mentioned above. The original proof used an adaptation of the powerful machinery developed by Mark Stern \cite{St1} to obtain index formulas on open spaces. The main advantages of this technique is that the argument is self-contained and avoids the use of spectral theory by computing asymptotic information instead. It is  important to mention that the original proof of this index theorem applies not only to Taub-NUT space but also to the family  of multi-centered Taub-NUT spaces. 

Here we give another argument that uses the index theorems for spaces with fibered boundaries developed by several authors \cite{LMP}. These theorems give expressions for the index as a sum of two terms usually called bulk and asymptotic contribution. The bulk is the usual Atiyah-Singer integrand \cite{BGV}, while the asymptotic contribution is generally given in terms of $\eta$-invariants of  Dirac-type operators restricted to the boundary. In our case,  the index theorem for exact-d-metrics of \cite{LMP} applies  and the boundary contribution, given in terms of the Bismut-Cheeger $\hat{\eta}$-form \cite{BC}, can be computed explicitly. The main reason for the success of this computation is that the boundary fibration in our case is just the Hopf fibration $S^3\to S^2.$ Here we restrict to  the single-centered Taub-NUT space as an illustration of the use of the index formula for fibered boundaries.

\subsection*{Acknowledgements}
 I would like to thank Sergey Cherkis and Mark Stern for helpful conversations and several constructive suggestions.

\section{Index Theory on Spaces with Fibered Boundaries}
We start reviewing the statement of the index theorem in \cite{LMP}.

\subsection{The Bismut-Cheeger $\hat{\eta}$-form. }\label{BCsec}
Let $\pi:M\to B$ be a locally trivial fibration  of spin manifolds with base an even dimensional manifold $B$, and   fibers isomorphic to a closed odd-dimensional manifold $Z$. We assume there is a connection on the fibration that induces a   splitting $TM=T_HM\oplus TM/B$ into horizontal and vertical tangent vectors, such that $\pi^*TB$ can be identified with $T_HM$.   Let $g^M=\pi^*g^B\oplus g^{M/B}$ be a Riemannian submersion metric, where $g^B$ is a metric on $TB$  pulled back to $T_HM$, and $g^{M/B}$ denotes a metric on the vertical fibers. 

Let $E\to M$ be a complex vector bundle with unitary connection $\nabla^E$ and curvature $F^E$. The bundle $E$ induces an infinite rank bundle $\pi_*E\to B$ with fibers given by $\Gamma(M_x,E_x)$, where $M_x,\,E_x$ denote the fibers over $x\in B.$ The connection $\nabla^E$ induces a connection on $\pi_*E$ denoted by 
$\nabla^{\pi_*E}$ (see  chapter 10 of \cite{BGV} ). 

We denote by $(\calS^{M/B},\nabla^{M/B})$ the vertical  spinor bundle together with its induced spin connection coming from the metric $g^{M/B}$. We use   $\nabla^{\calS^{M/B}\otimes E}=\nabla^{M/B}\otimes 1+1\otimes \nabla^E$  and the Clifford module structure on  $\calS^{M/B}\otimes E$ with respect to the Clifford algebra of $TM/B$ \footnote{We denote by $c=c^{M/B}$ the Clifford product by elements of $T^*M/B$.},  to construct a family of vertical Dirac operators denoted by $D^{M/B}=c^{M/B}\circ \nabla^{\calS^{M/B}\otimes E}.$




\begin{define}
Let $u$ be a positive  parameter, the   Bismut superconnection, acting on $\Gamma(M,\calS^{M/B}\otimes E)=\Gamma(B,\pi_*(\calS^{M/B}\otimes E))$, is defined by
 \begin{equation}\label{bismut1}
\frA_u=\nabla^{\pi_*(\calS^{M/B}\otimes E)}+\sqrt{u}D^{M/B}-\frac{c(T)}{4\sqrt{u}},
\end{equation}
where $T$ is the torsion form of the fibration $M\to B$ (see proposition 10.15 of \cite{BGV} ). 
\end{define}

\begin{define}
The Bismut-Cheeger Eta form of the vertical family of Dirac operators $D^{M/B}$ is defined by
\begin{equation}\label{Bismut2}
\hat{\eta}(D^{M/B})=\frac{1}{\sqrt{\pi}}\int_0^\infty \Tr^{\mathrm{ev}}\big((D^{M/B}+\frac{c(T)}{4{u}})e^{-\frA_u^2}\big)\frac{du}{2\sqrt{u}}.
\end{equation}
Here $\Tr^{\ev}$ denotes the operator trace on the even form part of $\hat{\eta}$.
\end{define}

\subsection{The Index Formula}\label{indsec}

Consider a Riemannian manifold $(\cM,g^\cM)$ such that its boundary $M=\p\cM$ is the total space of a fibration $\pi:M\to B$ like the one considered in section (\ref{BCsec}). 

We assume that on a tubular neighborhood of the boundary $(a,\infty)_y\times M$, the metric takes the form
\begin{equation}\label{fiberedmetric}
g^\cM=dy^2+\pi^*g^B+e^{-2y}g^{M/B}.
\end{equation}

In the terminology of \cite{LMP}, this is an exact d-metric with boundary defining function $x=e^{-y}.$ 

Let 
\begin{equation}
\cD=\begin{pmatrix}0&\cD^-\\\cD^+&0\end{pmatrix}
\end{equation} 
be a Dirac type operator on  $\calS\otimes \cE\to\cM$ (here $\calS=\calS^+\oplus\calS^-$ denotes the spin bundle), such that its boundary family $D^{M/B}$, acting on $\calS^{M/B}\otimes (\cE|_{M})=\calS^{M/B}\otimes E$, satisfies the technical assumption
\begin{equation}\label{gap0}
\mathrm{Spec (D_b^{M/B}})\cap (-\delta,\delta)=\emptyset,
\end{equation}
for some $\delta>0$ and for every $b\in B.$

With all these definitions in place, we can state the index theorem from  \cite{LMP}.
\begin{theo}
If $D^{M/B}$ satisfies assumption (\ref{gap0}) then 
\begin{equation}\label{ind1}
\IND_{L^2}(\cD^+)=\int_\cM \hat{A}(\cM,g^{\cM})\wedge \mathrm{Ch}(\cE)-\frac{1}{2\pi i}\int_{\p\cM}\hat{A}(B,g^B)\wedge \hat{\eta}(D^{M/B}),
\end{equation}
where $\hat{A}$ denotes the A-hat genus of the corresponding space with metric.\footnote{The $2\pi i$ factor does not appear in the original formula due to the different normalizations used. See \cite{Dai}.} The $\hat{\eta}$-form is computed with respect to the submersion metric $\pi^*g^B+g^{M/B}$
\end{theo}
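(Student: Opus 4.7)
The plan is to follow the heat-kernel/McKean-Singer strategy adapted to the exact d-calculus, with the boundary contribution extracted by a Bismut-Cheeger adiabatic-limit computation.

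First I would set up the Fredholm theory. The gap condition (\ref{gap0}) implies invertibility of the indicial (normal) family of $\cD$, so by the standard fibered cusp pseudodifferential calculus $\cD^\pm$ are Fredholm on $L^2(\cM,\calS^\pm\otimes\cE)$, with kernel and cokernel consisting of smooth sections decaying exponentially as $y\to\infty$. The same gap ensures that $e^{-t\cD^2}$ is trace-class on $L^2$, so the McKean-Singer identity
\[
\IND_{L^2}(\cD^+)=\Tr_{s}\bigl(e^{-t\cD^2}\bigr),\qquad t>0,
\]
holds and is independent of $t$.

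Next I would construct the heat kernel within the exact d-calculus and evaluate its supertrace by splitting $\cM$ into a compact interior piece and a tubular neighborhood $(a,\infty)_y\times M$ where (\ref{fiberedmetric}) holds. On the interior piece a Getzler rescaling on the diagonal reproduces the Atiyah-Singer integrand $\hat{A}(\cM,g^\cM)\wedge \mathrm{Ch}(\cE)$ in the $t\to 0^+$ limit. The defect between this bulk integral and the $t$-independent quantity $\Tr_s(e^{-t\cD^2})$ is what must be identified with the boundary term in (\ref{ind1}).

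The boundary contribution is analysed through the model operator on the end. After conjugation by an appropriate power of $e^{y}$, the Dirac operator takes the form $c(dy)\bigl(\p_y+A(y)\bigr)$ with $A(y)$ a Dirac-type family whose vertical part rescales as $e^{y}D^{M/B}$ and whose horizontal coupling is built from $\nabla^{\pi_*(\calS^{M/B}\otimes E)}$. Setting $u=e^{-2y}$ and incorporating the curvature of the fibration identifies this family, together with the connection correction coming from the second fundamental form of the fibers, with the Bismut superconnection $\frA_u$ of (\ref{bismut1}); the torsion term $c(T)/(4\sqrt{u})$ is precisely what forces the Chern-Weil representative on the base to be $\hat{A}(B,g^B)$ instead of $\hat{A}(M,g^{M})$. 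Integrating the model heat supertrace along $y$ and converting $-2\,dy$ into $du/u$ transforms the defect into exactly the integral (\ref{Bismut2}) defining $\hat{\eta}(D^{M/B})$, paired with $\hat{A}(B,g^B)$ on $B$.

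The hard part is this last step: justifying the adiabatic limit rigorously and pinning down the normalisation constant. One must prove uniform integrability of the model supertrace in $u$, commute the limits $t\to 0$ and $u\to 0,\infty$, and track Getzler rescalings simultaneously in the $y$-variable, in the base directions, and in the vertical fibers. Recovering the exact factor $1/(2\pi i)$ requires careful bookkeeping of how the normalisation of (\ref{Bismut2}) relates to the standard heat-kernel conventions, as noted in the footnote to the theorem. This is the technical heart of \cite{LMP} and of the parallel Bismut-Cheeger analysis \cite{BC}; any alternative argument that avoids explicit spectral information at infinity, such as the $L^2$-harmonic form method of \cite{St1}, must reproduce the same cancellations in a different disguise.
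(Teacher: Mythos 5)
This theorem is not proved in the paper at all: it is quoted verbatim from \cite{LMP} (whose proof in turn rests on Vaillant's analysis of fibered-cusp heat kernels and the Bismut--Cheeger adiabatic limit \cite{BC}), so the honest comparison is with that reference rather than with anything in this note. Measured against that, your outline correctly identifies the architecture of the argument --- Fredholmness from the invertibility of the boundary family, an interior Getzler rescaling producing the Atiyah--Singer density, and an adiabatic-limit analysis on the end in which $u=e^{-2y}$ converts the rescaled boundary family into the Bismut superconnection $\frA_u$ and the $y$-integral into the $du$-integral defining $\hat{\eta}$. That is the right picture, and the observation that the torsion term $c(T)/4\sqrt{u}$ is what replaces $\hat{A}(M,g^M)$ by $\hat{A}(B,g^B)$ is accurate.

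As a proof, however, the proposal stops exactly where the theorem begins. The identification of the defect $\lim_{t\to\infty}\Tr_s(e^{-t\cD^2})-\lim_{t\to 0}\Tr_s(e^{-t\cD^2})$ with $-\frac{1}{2\pi i}\int_{\p\cM}\hat{A}(B,g^B)\wedge\hat{\eta}$ is the entire content of \cite{LMP}, and you explicitly defer it (''the technical heart''). Two specific points cannot be waved through. First, the claim that (\ref{gap0}) makes $e^{-t\cD^2}$ trace class with a $t$-independent supertrace is asserted, not argued; in this calculus one must either prove trace-class-ness from the full ellipticity of $\cD$ in the d-calculus or work with a regularized trace whose $t$-derivative is a boundary term --- and in the latter case McKean--Singer acquires exactly the correction you are trying to compute, so the logic of ''constant supertrace plus interior local index theorem plus defect'' has to be set up more carefully than stated. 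Second, even granting trace-class-ness, the $t\to 0$ limit of the supertrace does not converge to the bulk integral uniformly near $y=\infty$; controlling the error there uniformly in $t$ and in the adiabatic parameter simultaneously, and showing the cross terms vanish, is where all the work lies. So the proposal is a correct roadmap of the cited proof, not an independent proof.
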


\subsection{Circle Fibrations}

We only need to compute $\hat{\eta}$ in the case where  $M\to B$ is a $S^1$-principal bundle with a Riemannian submersion metric. Several formulas simplify in this context.

Let $\{f_\alpha\}$ (resp.  $\{f^\alpha\}$) denote an orthonormal frame (coframe) on $B$ and $\{e\},\{e^*\}$ similarly defined on the vertical fibers $M/B$. We use $\{\tilde{f}_\alpha\}$ to denote the horizontal lifts to $T_HM$. Set $c(e^*)=-i$ and denote $\calS^{M/B}$ by $\calS^{S^1}.$

 The torsion form on a circle fibration can be computed explicitly (see \cite{Sav}). We get
\begin{equation}\label{torsion}
T(\tilde{f}_\alpha,\tilde{f}_\beta)=de^*(\tilde{f}_\alpha,\tilde{f}_\beta)=R(f_\alpha,f_\beta),
\end{equation}
where $R$ is the curvature of the $S^1$-connection inducing the splitting $TM=T_HM\oplus TB$.

Let $z$ denote a Grassmann variable i.e. $z^2=0$. A well-known trick by Bismut and Cheeger  rewrites $\frA_u^2$ as follows
\begin{equation}\label{trick}
\begin{split}
-u\big(\nabla^{\pi_*(\calS^{S^1}\otimes E)}_e&+\frac{R}{4u}-\frac{iz}{2\sqrt{u}}\big)^2+\sqrt{u}F^{E}(f_\alpha,e)f^\alpha\wedge e^*+\frac{1}{2}F^{E}(f_\alpha,f_\beta)f^\alpha\wedge f^{\beta}\\
&=\frA_u^2-z(\sqrt{u}\cD^{M/B}+\frac{c(T)}{4\sqrt{u}})
\end{split}
\end{equation}
This is just  \cite[4.68-4.70]{BC}  adapted to circle fibrations. An important simplification of the original formula is that the scalar curvature of the circle  fibers vanishes.

\section{Taub-NUT Space}
In this section we state the necessary definitions and results from \cite{ChLaSt}.
\subsection{Definition and Basic Properties}
We start by defining the single-centered Taub-NUT space, denoted by TN henceforth. It is a hyperK\"ahler 4-manifold that, outside a compact set, is  a circle fibration over $\mathbb{R}^3$. It has coordinates $\{x_1,x_2,x_3,\tau\}$, where the $x_j$ parameterize $\mathbb{R}^3$ and $\tau=\tau+2\pi$ parameterizes the circle fiber. Define the TN metric, denoted by $g_{\TN}$, by \footnote{Usually, $V=l+\frac{1}{2r}$ for some fixed $l>0$. We simplify the computations by setting $l=1$.}
\begin{equation}\label{original}
g_{\TN}=V(dx_1^2+dx_2^2+dx_3^2)+\frac{1}{V}(d\tau+\omega)^2,
\end{equation}
where $V=1+\frac{1}{2r}$ with $r=\sqrt{x_1^2+x_2^2+x_3^2}$ and $\omega$ is a one-form such that $\star_3 dV=d\omega.$ \footnote{$\star_3$ denotes the Hodge start in $\bbR^3$.}  \footnote{We use the orientation $dx_1\wedge dx_2\wedge dx_3\wedge d\tau$. }


Using polar coordinates about the origin, we can rewrite the $\mathbb{R}^3$-metric as $dr^2+r^2g_{S^2}$, where $g_{S^2}$ is the usual round metric on the two-sphere.

\subsection{Generic ASD Instantons on TN}

Let $\mathcal{E}\to \TaubN$ be a unitary bundle of rank $m$. A generic Anti-Self-Dual (ASD) instanton on it, is a unitary connection $A$ on $\mathcal{E}$ such that its $L^2$-curvature form satisfies $F_A=-\star_{\TN} F_A$. Here $\star_\TN$ is the Hodge star of the original metric $g_\TN$. The genericity is a technical condition explained in \cite{ChLaSt}. In  (\cite{ChLaSt} theorem B) it is proved that there is a frame of $\mathcal{E}$ such that an  ASD generic instanton on TN has the following asymptotic form outside a compact set
\begin{equation}\label{inst}
\cA=-i\,\mathrm{Diag}\big((\lambda_j+\frac{m_j}{2r})\frac{d\tau+\omega}{V}+\eta_j\big)+\mathcal{O}(r^{-2}),
\end{equation}
where $\eta_j$ is a connection one-form on a complex line bundle $W(j)$ over $S^2$. The $\lambda_j$ are related to the asymptotic eigenvalues of the holonomy of $A$ along $\tau$-circles and they are pairwise distinct and constant.  Notice that, near infinity, the bundle $\mathcal{E}$ decomposes as a direct sum of eigenline-bundles of the holonomy of $\cA$.

We assume a stronger genericity assumption by imposing 
\begin{equation}\label{a}
e^{2\pi i \lambda_j}\neq 1,
\end{equation}
for every $j$. This is required to guarantee  the Fredholmness of the Dirac operator twisted by $\cA$. \cite{ChLaSt}

\subsection{Modifications of the Metric}

In order to use the index formula of {\cite{LMP}}, we first need to modify the metric on $\TaubN$, preserving the index, to resemble a fibered cusp metric at infinity of the type defined in section \ref{indsec}.

First, we apply a conformal transformation to $g_{\TN}$ with a conformal factor that equals $e^{2u}=V^{-1}r^{-2}$ for $r$ large. We  consider $e^{2u}$ to be equal to the identity around the origin. The resulting metric, near infinity and using the change of variables $r=e^{y}$, equals
\begin{equation}\label{Firstmod}
g'=dy^2+g_{S^2}+\frac{1}{V^2e^{2y}}(d\tau+\omega)^2.
\end{equation}
Now we use a smooth homotopy between $g'$ and a metric $g$. The homotopy replaces $V$ by $V_t=1+\frac{t}{2e^y}$ for $y$-large and it equals the identity near the origin. The resulting metric at $t=0$, for $y$ large, has the form
\begin{equation}\label{metric}
g=dy^2+\pi^*g_{S^2}+\frac{1}{e^{2y}}(d\tau+\omega)^2
\end{equation}
and it is an exact d-metric  {\cite{LMP}}. The boundary fibration in this case is the Hopf fibration $\pi: S^3_\infty\to S^2_\infty$.
\begin{lemma}
Let $\mathcal{R}_g$ be the Riemannian curvature of the metric $g$, then
$$\frac{1}{192\pi^2}\int_{TN}\tr\, \mathcal{R}_g\wedge \mathcal{R}_g=\frac{1}{12}$$
\end{lemma}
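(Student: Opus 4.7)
The prefactor $1/(192\pi^2)$ identifies the integrand as the degree-$4$ component of the $\hat A$-form of $(\TN,g)$, namely $\hat A_1(g) = -p_1(g)/24$, where $p_1(g) = -(1/8\pi^2)\tr\mathcal{R}_g\wedge\mathcal{R}_g$ is the first Pontryagin form. The claim is therefore equivalent to showing $\int_{\TN} p_1(g) = -2$. Because $\TN$ is non-compact and $g$ is only an exact $d$-metric at infinity (and not a product or flat metric there), this integral is not purely topological: it is a genuine Chern-Weil invariant that depends on the precise asymptotic form of $g$.

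The first step is to compute $p_1(g)$ directly from the metric. Outside a compact set $g$ is given explicitly by (\ref{metric}), so I would introduce the orthonormal coframe $e^0=dy$, $e^1,e^2$ lifted from an orthonormal coframe on $(S^2,g_{S^2})$, and $e^3=e^{-y}(d\tau+\omega)$. Since the original Taub-NUT data have $V=1+1/(2r)$, one gets $d\omega=\star_3 dV=-\tfrac{1}{2}\mathrm{vol}_{S^2}$, confirming that the asymptotic circle bundle is the Hopf fibration with explicit connection and curvature. Cartan's structure equations then give the Levi-Civita connection $1$-forms $\omega^i{}_j$, the curvature $2$-forms $\Omega^i{}_j$, and finally $\tr\mathcal{R}_g\wedge\mathcal{R}_g$ as an elementary $4$-form in $y$ and the coframe.

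To turn this form into a number, I would use Chern-Simons transgression. Writing $\tr\mathcal{R}_g\wedge\mathcal{R}_g = d\,\mathrm{CS}(\omega^g)$ on the end and splitting the integral at a level surface $\{y=Y\}$, the interior contribution is finite (the conformal factor and the homotopy used to produce $g$ are constructed precisely to make the interior smooth and compact) and it combines with the boundary Chern-Simons integral on $\{y=Y\}$. In the limit $Y\to\infty$ the cross-section is a Berger $3$-sphere with fiber collapsing like $e^{-Y}$, and the asymptotic Chern-Simons contribution is governed by the Hopf bundle's first Chern number $\pm 1$.

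The main technical obstacle is the adiabatic analysis of the boundary Chern-Simons term: the fiber length shrinks as $e^{-Y}$ while certain components of the curvature of $g$ grow as $e^{2Y}$, so the $3$-form on $\{y=Y\}$ contains pieces with opposite scalings that must be balanced carefully to avoid spurious divergences. Once the bookkeeping is done, the bulk and boundary contributions collect to give $\int_{\TN} p_1(g) = -2$, which is equivalent to $\frac{1}{192\pi^2}\int_{\TN}\tr\mathcal{R}_g\wedge\mathcal{R}_g = \frac{1}{12}$, as claimed.
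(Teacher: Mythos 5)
Your overall framework---Chern--Weil plus Chern--Simons transgression on the trivialized tangent bundle, splitting the integral at a level surface---is the same one the paper uses, but your plan locates the answer in the wrong place, and this is a genuine gap rather than a stylistic difference. For the collapsed metric $g=dy^2+\pi^*g_{S^2}+e^{-2y}(d\tau+\omega)^2$, the Levi--Civita connection in the orthonormal coframe $\{dy,\,e^1,\,e^2,\,e^{-y}(d\tau+\omega)\}$ has $\omega^1{}_2$ equal to the $S^2$ spin connection up to $O(e^{-y})$, and all other components of order $e^{-y}$; the curvature is bounded (nothing grows like $e^{2Y}$), the $O(1)$ part of $CS(g)$ on the slice $\{y=Y\}$ is a pullback of a $3$-form from $S^2$ and hence vanishes, and the remainder is $O(e^{-Y})$. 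So the boundary Chern--Simons integral at infinity tends to $0$; it is \emph{not} ``governed by the Hopf bundle's first Chern number.'' (That heuristic applies to the uncollapsed ALF metric $g_{\TN}$, not to the exact d-metric $g$ the lemma is about.) The delicate adiabatic balancing you anticipate as the main obstacle does not occur.

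Consequently the entire value $\frac{1}{12}$ must come from the interior contribution, which your plan dismisses as merely ``finite'' without any mechanism to evaluate it. This is the actual content of the proof: on the ball where $g$ coincides with the genuine Taub--NUT metric one uses the explicit Gibbons--Hawking/hyperK\"ahler identity (Lemma 32 of the cited paper \cite{ChLaSt}) expressing $\tr\,\mathcal{R}_{g_{\TN}}\wedge\mathcal{R}_{g_{\TN}}$ as an exact form built from $\frac{|\nabla V|^2}{V^3}$, so that the integral reduces to a boundary term at a small sphere around the NUT center, which evaluates to $\frac{1}{12}$; the transgression terms on the intermediate annulus (where the conformal rescaling and the homotopy deform the metric) cancel, and the term at infinity dies as above. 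Without this input---or some substitute computation of the Pontryagin form of $g_{\TN}$ near the center---your argument produces no number at all, and if pushed through as written (interior unevaluated, boundary term credited with $\pm 1$) it would give the wrong answer.
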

\begin{proof}
The space TN with metric $g$ is contractible and therefore topologically trivial. This implies that its tangent bundle has a global trivialization. Fixing a trivialization, we can define the Chern-Simons form $CS(g)$ such that $dCS(g)= \frac{1}{192\pi^2}\tr\, \mathcal{R}_g\wedge \mathcal{R}_g.$ 

Let $B$ be a ball around the origin such that the metrics $g_{\TaubN}$ and $g$ coincide in on an open neighborhood of it. Then, (see Lemma 32 of \cite{ChLaSt})
\begin{align*}
\frac{1}{192\pi^2}\int\tr\, \mathcal{R}_g\wedge \mathcal{R}_g&=\frac{1}{192\pi^2}\int_B\tr\, \mathcal{R}_g\wedge \mathcal{R}_g+\frac{1}{192\pi^2}\int_{\TaubN\setminus B}\tr\, \mathcal{R}_g\wedge \mathcal{R}_g\\
&=\frac{1}{192\pi^2}\int_{B}\tr\, \mathcal{R}_{g_\TaubN}\wedge \mathcal{R}_{g_\TaubN}+\frac{1}{192\pi^2}\int_{\TaubN\setminus B}\tr\, \mathcal{R}_g\wedge \mathcal{R}_g\\
&=-\frac{1}{192\pi}\int_{S^2_0}\nabla_n\big(\frac{|\nabla V|^2}{V^3}\big)\vol_{S^2_0}+\int_{S^3_\infty}CS(g),
\end{align*}
where $S^2_0$ is a small two-sphere around the origin with outward normal vector $n$. The form $CS(g)$ decays exponentially and the last boundary integral equals zero. The first summand equals $\frac{1}{12}.$

\end{proof}

\subsection{The Twisted Dirac Operator}

Let $\cD_A$ be the twisted Dirac operator with respect to the metric $g_\TN$. We write $\cD_{g'}$ and $\cD_g$ to denote the corresponding twisted Dirac operators with respect to the other metrics\footnote{To simplify notation, we set $\cD_g=\cD_g^A$ etc...}. Let $\{\cD_t\}_{0\leq t\leq 1}$ be the family of Dirac operators associated to the metrics in the homotopy between $g'$ and $g$.

All of these operators admit a decomposition according to chirality as
\begin{equation}\label{chiral}
\cD=\begin{pmatrix}0&\cD^-\\\cD^+&0\end{pmatrix},
\end{equation}
where $\cD^\pm: \Gamma(\calS^\pm\otimes \cE)\to \Gamma(\calS^\mp\otimes \cE)$. Here we denote by $\calS=\calS^+\oplus \calS^-$ the spinor bundle of TN.

The $L^2$-indices of these operators turn out to be the same
\begin{lemma}
\begin{equation}\label{indfund}
\IND_{L^2}\cD^+_A=\IND_{L^2}\cD^+_{g'}=\IND_{L^2}\cD^+_{g}
\end{equation}
\end{lemma}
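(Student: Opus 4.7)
The plan is to establish the two equalities separately by two different mechanisms: the first by conformal covariance of the Dirac operator in dimension $4$, and the second by homotopy invariance of the Fredholm index.

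For the first equality $\IND_{L^2}\cD^+_A=\IND_{L^2}\cD^+_{g'}$, I would use the fact that $g'=e^{2u}g_{\TN}$ for a smooth function $u$ (equal to $\log(V^{-1/2}r^{-1})$ near infinity and identically zero near the origin), and invoke the standard conformal identity
\begin{equation*}
\cD^+_{g'}\bigl(e^{-3u/2}F_u\phi\bigr)=e^{-5u/2}F_u\bigl(\cD^+_A\phi\bigr),
\end{equation*}
where $F_u$ is the natural bundle isomorphism $\calS_{g_\TN}\otimes\cE\to\calS_{g'}\otimes\cE$ induced by the conformal change. In dimension $4$ the exponent $-3u/2$ is precisely the critical one for which the map $\phi\mapsto e^{-3u/2}F_u\phi$ extends to an isometric isomorphism $L^2(g_\TN)\to L^2(g')$ on twisted spinors. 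Since $\cA$ is treated as an external connection, this isometry intertwines $\cD^+_A$ with $\cD^+_{g'}$ and therefore identifies both their $L^2$-kernels and their $L^2$-cokernels, yielding the equality of indices.

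For the second equality $\IND_{L^2}\cD^+_{g'}=\IND_{L^2}\cD^+_{g}$, the homotopy $V_t=1+t/(2e^y)$ defines a smooth family of metrics $\{g_t\}_{0\le t\le 1}$ from $g_1=g'$ to $g_0=g$, and a corresponding smooth family of twisted Dirac operators $\{\cD^+_t\}$. All $g_t$ share the same exact d-metric structure at infinity with boundary fibration equal to the Hopf fibration $S^3_\infty\to S^2_\infty$, and the asymptotic form (\ref{inst}) of $\cA$ is $t$-independent. Hence the boundary vertical Dirac family $D^{M/B}_t$ differs from its $t=0$ value only by lower-order terms that vanish at infinity, so the spectral gap (\ref{gap0}) persists for the entire family. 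Each $\cD^+_t$ is therefore Fredholm on $L^2$ and the family varies norm-continuously on the appropriate fibered cusp Sobolev spaces; homotopy invariance of the Fredholm index then concludes the proof.

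The step I expect to be the main obstacle is the analytic content of the second equality: rigorously verifying that $\{\cD^+_t\}$ is a norm-continuous family of Fredholm operators between the correct weighted Sobolev spaces adapted to the fibered cusp at infinity. This reduces to uniform spectral gaps and uniform resolvent bounds for $D^{M/B}_t$, which are ultimately guaranteed because the asymptotic holonomy eigenvalues $\lambda_j$ of $\cA$ do not depend on $t$ and the generic assumption $e^{2\pi i\lambda_j}\neq 1$ excludes all resonant values throughout the homotopy.
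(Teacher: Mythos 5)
Your treatment of the second equality is essentially the paper's argument: the paper also deforms $g'$ to $g$ through the family $V_t$, notes that the $\lambda_j$ are constant and satisfy $e^{2\pi i\lambda_j}\neq 1$ throughout, and concludes that $\{\cD_t\}$ is a continuous family of Fredholm operators (the Fredholmness being delegated to Lemma 22 of \cite{ChLaSt}), so the index is constant. Your added remarks about the uniform spectral gap of the boundary family are exactly the right justification, and since $V_t\to 1$ as $y\to\infty$ for every $t$, the leading boundary operator is literally $t$-independent; this part is fine.

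The first equality is where there is a genuine gap. The paper simply cites \cite{ChLaSt} for it, but your proposed direct proof rests on a false claim: the conformally covariant weight $e^{-3u/2}$ (i.e.\ $e^{-(n-1)u/2}$ with $n=4$) is \emph{not} the weight that makes the rescaling an $L^2$ isometry; that weight is $e^{-nu/2}=e^{-2u}$, and the two never coincide. Concretely, with $g'=e^{2u}g_{\TN}$ one has $dvol_{g'}=e^{4u}dvol_{g_{\TN}}$, so $\|e^{-3u/2}F_u\phi\|^2_{L^2(g')}=\int e^{u}|\phi|^2\,dvol_{g_\TN}$. Since $e^{u}=V^{-1/2}r^{-1}\to 0$ at infinity, the map $\phi\mapsto e^{-3u/2}F_u\phi$ is bounded from $L^2(g_{\TN})$ into $L^2(g')$ and sends $L^2$ harmonic spinors to $L^2$ harmonic spinors in that direction, but its inverse multiplies by $e^{-u}=V^{1/2}r\to\infty$, so an $L^2(g')$ harmonic spinor is not automatically $L^2(g_{\TN})$ after rescaling. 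Identifying the $L^2$ kernels and cokernels therefore requires a nonformal input --- decay estimates for $L^2$ harmonic spinors (or a parametrix/Fredholm argument showing both operators have the same index despite the non-equivalence of the weighted norms). Supplying exactly this is the content of the result in \cite{ChLaSt} that the paper invokes, so your sketch as written does not close the first equality.
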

\begin{proof}
The first equality is proved in \cite{ChLaSt}. For the second one, since the $\lambda_j$ are constant and satisfy  assumption (\ref{a}), the  $\cD_t$ satisfy the hypotheses of lemma 22 in \cite{ChLaSt} for every $0\leq t\leq 1$. This implies that $\{\cD_t\}$ is a homotopy within the space of Fredholm operators so the index is preserved.


\end{proof}
From now on, we only use the operator $\cD_g$ and its chirality components $\cD_g^{\pm}$.

The corresponding vertical family of Dirac operators is
\begin{equation}\label{vertical}
D^{M/B}=D_g^\p=\oplus_{j=1}^mD^{\p,{\lambda_j}}=\oplus_{j=1}^m(-i)(\p_\tau-i{\lambda_j}).
\end{equation}
Remember that here we use the submersion metric $g_{S^2_\infty}+(d\tau+\omega)^2$ on $S^3_\infty.$
\begin{lemma}\label{gap}
The family $D^\p_g$ parametrized by points in $S^2_\infty$ satisfies the spectral gap asumption in \cite{LMP}. That is, Spec$\{D^\p_{g,p}\}_{p\in S^2}\cap (-\delta,\delta)=\emptyset$ for some $\delta>0$.
\end{lemma}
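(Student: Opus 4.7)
The plan is to diagonalize the family $D^\partial_g$ fiberwise using the decomposition in (\ref{vertical}) and then explicitly read off the spectrum on each circle fiber.

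First, I would observe that the restriction of the twisted Dirac operator to a circle fiber $S^1_p \subset S^3_\infty$ is a first-order ordinary differential operator on sections of a complex line bundle over $S^1_p$. By the direct-sum decomposition in (\ref{vertical}), it suffices to analyze each summand $D^{\partial,\lambda_j} = -i(\partial_\tau - i\lambda_j)$ separately. Trivializing the restriction of the $j$-th eigenline bundle to $S^1_p$ by a covariantly constant section of $W(j)|_{S^1_p}$ with respect to the horizontal connection $\eta_j$, a Fourier series in $\tau \in [0,2\pi)$ diagonalizes $D^{\partial,\lambda_j}$: the eigenfunctions are $e^{in\tau}$ for $n \in \mathbb{Z}$ with eigenvalues $n - \lambda_j$.

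Next, I would invoke the stronger genericity assumption (\ref{a}), namely $e^{2\pi i \lambda_j} \neq 1$, which is precisely the statement that $\lambda_j \notin \mathbb{Z}$. Consequently $n - \lambda_j \neq 0$ for every $n \in \mathbb{Z}$, and the distance from zero to the spectrum of $D^{\partial,\lambda_j}$ is
\[
\delta_j := \min_{n \in \mathbb{Z}} |n - \lambda_j| = \min\bigl(\{\lambda_j\},\, 1 - \{\lambda_j\}\bigr) > 0,
\]
where $\{\,\cdot\,\}$ denotes the fractional part. Since there are only finitely many eigenvalues $\lambda_1,\dots,\lambda_m$, setting $\delta := \tfrac{1}{2}\min_{1 \le j \le m} \delta_j$ gives a uniform positive gap.

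Finally, I would address the $p$-dependence. Because the $\lambda_j$ are constant along $S^2_\infty$ (they are asymptotic holonomy eigenvalues, independent of the base point), the spectrum of $D^{\partial,\lambda_j}_{g,p}$ is the same for every $p$; any change of trivialization of $W(j)|_{S^1_p}$ amounts to a unitary gauge transformation on $S^1$, which does not affect the spectrum. Taking the union over $j$ and over $p \in S^2_\infty$ yields
\[
\mathrm{Spec}\{D^{\partial}_{g,p}\}_{p \in S^2_\infty} \cap (-\delta,\delta) = \emptyset,
\]
which is the required gap condition from \cite{LMP}.

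The only non-routine point is checking that the restriction of the global twisted Dirac operator to a circle fiber really reduces to the stated operator $-i(\partial_\tau - i\lambda_j)$ on each summand, and not to a perturbation thereof by curvature or torsion terms. This is already asserted by (\ref{vertical}), which in turn relies on the asymptotic normal form (\ref{inst}) of the instanton and the simplification in the circle-fibration case (in particular $c(e^*) = -i$ and the vanishing of the scalar curvature of the fiber noted after (\ref{trick})). Given (\ref{vertical}), the argument above is essentially mechanical.
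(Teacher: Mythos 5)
Your proposal is correct and follows essentially the same route as the paper: decompose $D^\partial_{g,p}$ into the summands $-i(\partial_\tau - i\lambda_j)$, use Fourier modes to see the spectrum is $\{n-\lambda_j\}$, and invoke $e^{2\pi i\lambda_j}\neq 1$ together with the constancy of the $\lambda_j$ to get a uniform gap. Your choice $\delta_j=\min(\{\lambda_j\},1-\{\lambda_j\})$ (distance to the nearest integer) is in fact the more careful constant; the paper's $\frac{1}{2}\min_j|\lambda_j|$ implicitly assumes the $\lambda_j$ lie in a fundamental domain.
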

\begin{proof}
Given $p\in S^2_\infty$, the boundary operator $D^\p_p$ is a multiple of a  direct sum of operators of the form  $-i(\p_\tau-i\lambda_j)$. Since  $e^{2\pi i\lambda_j}\neq 1$,  the operator is invertible. The $\lambda_j$ are constant so we can take $\delta=\frac{1}{2}\mathrm{min}_j|\lambda_j|$.
\end{proof}
\section{The Index of $D_g$}

From the previous sections we see that $\cD_g$ satisfies the assumptions of the index formula in \cite{LMP}. In our case, we obtain
\begin{equation}\label{indthm}
\IND_{L^2}D^+_g=\int_{TN}\big(\frac{\mathrm{Rank}(\mathcal{E})}{192\pi^2}\tr\, \mathcal{R}_g\wedge \mathcal{R}_g-\frac{1}{8\pi^2}\tr \,F_A\wedge F_A\big)-\frac{1}{2\pi i}\int_{S^2_\infty}\hat{\eta},
\end{equation}
where $\hat{\eta}\in \Omega^\ev(S^2_\infty)$ is an even form called the Bismut-Cheeger $\hat{\eta}$-form, computed with respect to the metric $g_{S^2}+(d\tau+\omega)^2$ on the boundary fibration $S^3_\infty\to S^2_\infty$.

It remains to compute $\hat{\eta}$ in terms of the asymptotic form of the twisting connection $\cA|_{S^3_\infty}=A$.

\subsection{Explicit Computation}
Again,  the boundary fibration on TN  equals the  Hopf fibration $S^3\to S^2$ with metric $g_{S^2}\oplus (d\tau+\omega)^2$. Notice that $d\tau+\omega$ is a connection one-form for the fibration so the curvature equals
\begin{equation}\label{s1curv}
R=d\omega=-\frac{1}{2}\vol_{S^2_\infty},
\end{equation}
where $\vol_{S^2_\infty}$ is the volume form on the two-sphere.

 The corresponding bundle $\cE|_{S^3_\infty}=E\to S^3_\infty$ is  the restriction of the instanton bundle $\cE$ to the boundary fibration inherits a connection of the form
\begin{equation}\label{inst2}
A=-i\,\mathrm{Diag}\big(\lambda_j({d\tau+\omega})+\pi^*(\eta_j)\big).
\end{equation}
This splitting of $A$ and the results of \cite{ChLaSt} imply that $E\to S^3_\infty$ can be decomposed as a direct sum 
\begin{equation}\label{direct}
E=\oplus_{j=1}^m \pi^*W(j),
\end{equation}
where the  $W(j)$ are line bundles over the base $S^2_\infty$ with connection form $\eta_j$. Note that we identify $e^*=(d\tau+\omega)$. 


The Bismut superconnection in this case equals 
\begin{equation}\label{bismut3}
\frA_u=\oplus_{j=1}^{m}\big(\nabla^{\pi_*(\calS^{S^1}\otimes W(j) )}+\sqrt{u}D^{\p,{\lambda_j}}\big)-\frac{c(T)}{4\sqrt{u}}
\end{equation}
The identity (\ref{trick}) simplifies further to
\begin{equation}\label{trick2}
-u\big(\nabla_e+\frac{R}{4u}-\frac{iz}{2\sqrt{u}}\big)^2+F^{E}(f_1,f_2)f^1\wedge f^{2}=\frA_u^2-z\big(\sqrt{u}\cD^{M/B}+\frac{c(T)}{4\sqrt{u}}\big),
\end{equation}
where $\{f_1,f_2\}$ is an oriented orthonormal frame on $S^2_\infty=S^2$ and $F^E$ is the curvature of $A$. Notice that $F^E(e,f_\alpha)=0$ for  $\alpha=1,2.$

The purpose of (\ref{trick}) and (\ref{trick2}) is the following lemma.
\begin{lemma}
Let $\Tr^z(a+zb)=\Tr\, b$ then
\begin{equation}\label{trick3}
\Tr^\ev\big(D^\p+\frac{c(T)}{4u}\big)e^{-\frA_u^2}=u^{-1/2}\Tr^z\exp \big({u(\nabla_e+\frac{R}{4u}-\frac{iz}{2\sqrt{u}})^2-F^E}\big),
\end{equation}
where $F^E=F^\mathcal{E}(f_1,f_2)f^1\wedge f^{2}$.
\end{lemma}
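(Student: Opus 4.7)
The plan is to exponentiate the identity (\ref{trick2}) and then use the Duhamel expansion, exploiting $z^2=0$ to truncate the series at first order. Specifically, rearranging (\ref{trick2}) gives
\[
u\bigl(\nabla_e+\tfrac{R}{4u}-\tfrac{iz}{2\sqrt{u}}\bigr)^2-F^{E}=-\frA_u^2+z\bigl(\sqrt{u}\cD^{M/B}+\tfrac{c(T)}{4\sqrt{u}}\bigr),
\]
so the right-hand side of the desired identity is, before taking $\Tr^{z}$, simply
\[
\exp\bigl(-\frA_u^2+zX\bigr),\qquad X:=\sqrt{u}\,D^{\partial}+\tfrac{c(T)}{4\sqrt{u}}.
\]

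Next, I would apply the Duhamel formula. Since $(zX)^{2}=z^{2}X^{2}=0$, all terms of order $\geq 2$ in $zX$ vanish, leaving the exact identity
\[
e^{-\frA_u^2+zX}=e^{-\frA_u^2}+z\int_{0}^{1}e^{-s\frA_u^2}\,X\,e^{-(1-s)\frA_u^2}\,ds.
\]
Taking the operator trace and using cyclicity, the $s$-integral collapses to $\Tr(X\,e^{-\frA_u^2})$, hence
\[
\Tr\,e^{-\frA_u^2+zX}=\Tr\,e^{-\frA_u^2}+z\,\Tr\bigl(X\,e^{-\frA_u^2}\bigr).
\]
Applying the projection $\Tr^{z}(a+zb)=\Tr\,b$ then gives $\Tr^{z}e^{-\frA_u^2+zX}=\Tr(X e^{-\frA_u^2})=\sqrt{u}\,\Tr\bigl((D^{\partial}+\tfrac{c(T)}{4u})e^{-\frA_u^2}\bigr)$. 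Dividing by $\sqrt{u}$ yields (\ref{trick3}).

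The only point requiring a little care is the parity bookkeeping behind $\Tr^{\ev}$. On the right-hand side, the exponent is built from the curvature $R$, the bundle curvature $F^{E}$, and a term linear in the Grassmann variable $z$; after extracting the coefficient of $z$ via $\Tr^{z}$, only even powers of $R$ and $F^{E}$ survive, so the result is automatically an even form on $B=S^{2}_\infty$, matching the $\Tr^{\ev}$ on the left. The substantive content, namely identifying the two exponents, is entirely supplied by (\ref{trick2}); the rest is the Duhamel/Grassmann trick, which is the standard Bismut-Cheeger manipulation. The main conceptual step — and the one I would double-check carefully — is verifying that the sign and scaling in the square $(\nabla_e+\tfrac{R}{4u}-\tfrac{iz}{2\sqrt u})^2$ produce exactly the term $z(\sqrt u D^{\partial}+\tfrac{c(T)}{4\sqrt u})$ with the correct coefficient, as otherwise the factor $u^{-1/2}$ on the right-hand side would not come out right.
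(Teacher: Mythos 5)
Your proposal is correct and follows exactly the paper's route: the paper's proof is the one-liner ``exponentiate both sides of (\ref{trick2}) and use $\Tr^z(e^{a+bz})=\Tr\,e^a b$,'' and your Duhamel expansion together with $z^2=0$ and cyclicity of the trace is precisely the standard justification of that identity. The rearrangement of (\ref{trick2}) and the bookkeeping of the $\sqrt{u}$ factor are handled correctly.
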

\begin{proof}
Exponentiate both sides of (\ref{trick2}) and notice that $\Tr^z(e^{a+bz})=\Tr \,e^ab=\Tr \,b e^a.$
\end{proof}
Replacing this in the definition of  $\hat{\eta}$ gives
\begin{equation}\label{replace}
\hat{\eta}(D_g^\p)=
\frac{1}{\sqrt{\pi}}\int_0^\infty \Tr^z\exp\big(u(\nabla_e+\frac{R}{4u}-\frac{iz}{2\sqrt{u}})^2\big)\frac{du}{2u}\,e^{-F^E}
\end{equation}
Let $\Lambda=\mathrm{Diag}(\lambda_j)$. In order to compute $\Tr^z$, we use a Fourier mode decomposition of  in the circle variable $\tau$ of sections of $E$. The action of $\nabla_e=\p_\tau-i\Lambda$ on the $k^{\mathrm{th}}$-mode equals $ik-i\Lambda$. Therefore,
\begin{align*}
\hat{\eta}(D_g^\p)&=
\frac{1}{\sqrt{\pi}}\int_0^\infty \sum_{k\in\mathbb{Z}}\Tr^z\big(\exp(-u(k-\Lambda+\frac{R}{4ui}-\frac{z}{2\sqrt{u}})^2)\big)\frac{du}{2u}\,e^{-F^E}\\
&=\frac{1}{\sqrt{\pi}}\int_0^\infty \tr_E \sum_{k\in\mathbb{Z}}(k-\Lambda+\frac{R}{4ui})e^{-u(k-\Lambda+\frac{R}{4ui})^2}\frac{du}{2\sqrt{u}}\,e^{-F^E}\\
\end{align*}
We  need the Poisson summation formula \cite{ChLaSt}.
$$\sum_{k\in\mathbb{Z}}(k+a)e^{-4\pi^2s(k+a)^2}=\sum_{p\geq1}2p\sin(2\pi p a)(4\pi s)^{-3/2}e^{-\frac{p^2}{4s}}.$$
to obtain
\begin{equation}
\hat{\eta}(D_g^\p)=\pi\int_0^\infty\tr_E\sum_{p\geq 1}p\sin\big(2\pi p(-\Lambda+\frac{R}{4ui})\big)e^{-\frac{\pi^2 p^2}{u}}\frac{du}{u^2}\, e^{-F^E}
\end{equation}
Solving the $u-$integral and simplifying gives
\begin{equation}
\hat{\eta}(D_g^\p)=\tr_E\big(\sum_{p\geq 1}\frac{-\sin 2\pi p\Lambda}{\pi p}+\frac{R}{2i}\sum_{p\geq 1}\frac{\cos 2\pi p\Lambda}{\pi^2 p^2}\big)\, e^{-F^E}
\end{equation}
Recall now  the Fourier series expansions of Bernoulli polynomials \begin{equation}\label{almost}
\hat{\eta}(D_g^\p)=\tr_E\big((\{\Lambda\}-\frac{1}{2})+(\{\Lambda\}^2-\{\Lambda\}+\frac{1}{6})\frac{R}{2i}\big)\, e^{-F^E}
\end{equation}
Finally, integrating over $S^2_\infty$ and taking $\tr_E$ we get 
\begin{equation}\label{punchline}
\frac{1}{2\pi i}\int_{S^2_\infty}\hat{\eta}=-(\{\Lambda\}-\frac{1}{2})\frac{1}{2\pi i}\int_{S^2_\infty}\tr_E F^E+\frac{m}{2}(\{\Lambda\}^2-\{\Lambda\}+\frac{1}{6})
\end{equation}
Going back to (\ref{indthm}) we deduce the  result of \cite{ChLaSt}.
\begin{theo}
 \cite{ChLaSt} Theorem 41.
\begin{align}\label{indthm2}
\IND_{L^2}\cD_g^+=-\frac{1}{8\pi^2}\int_{TN}\tr \,F_A\wedge F_A+
(\{\Lambda\}-\frac{1}{2})\frac{1}{2\pi i}\int_{S^2_\infty}\tr_EF^E-\frac{m}{2}(\{\Lambda\}^2-\{\Lambda\}),
\end{align}
where $m=\mathrm{Rank}(\mathcal{E})$.
\end{theo}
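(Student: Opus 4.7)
The essential work has already been assembled in the preceding sections: the plan is simply to feed the ingredients into the Lichnerowicz-Mazzeo-Piazza index formula (\ref{indthm}) and watch the arithmetic close up. Concretely, I would start from
\begin{equation*}
\IND_{L^2}\cD_g^+=\int_{TN}\Big(\frac{m}{192\pi^2}\tr\,\mathcal{R}_g\wedge\mathcal{R}_g-\frac{1}{8\pi^2}\tr\,F_A\wedge F_A\Big)-\frac{1}{2\pi i}\int_{S^2_\infty}\hat{\eta},
\end{equation*}
which is legitimate because Lemma \ref{gap} verifies the spectral-gap hypothesis (\ref{gap0}) and because $\IND_{L^2}\cD_A^+=\IND_{L^2}\cD_g^+$ by the homotopy argument in (\ref{indfund}).

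Next I would substitute the two explicit pieces already computed. The curvature lemma in Section 3.3 gives
\begin{equation*}
\frac{m}{192\pi^2}\int_{TN}\tr\,\mathcal{R}_g\wedge\mathcal{R}_g=\frac{m}{12},
\end{equation*}
and the closed-form expression (\ref{punchline}) for the boundary contribution reads
\begin{equation*}
\frac{1}{2\pi i}\int_{S^2_\infty}\hat{\eta}=-\Big(\{\Lambda\}-\tfrac{1}{2}\Big)\frac{1}{2\pi i}\int_{S^2_\infty}\tr_E F^E+\frac{m}{2}\Big(\{\Lambda\}^2-\{\Lambda\}+\tfrac{1}{6}\Big).
\end{equation*}
Inserting both into the index formula produces
\begin{equation*}
\IND_{L^2}\cD_g^+=\frac{m}{12}-\frac{1}{8\pi^2}\int_{TN}\tr\,F_A\wedge F_A+\Big(\{\Lambda\}-\tfrac{1}{2}\Big)\frac{1}{2\pi i}\int_{S^2_\infty}\tr_E F^E-\frac{m}{2}\Big(\{\Lambda\}^2-\{\Lambda\}+\tfrac{1}{6}\Big).
\end{equation*}

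The final step is the observation that the bulk $\hat{A}$-genus contribution $\frac{m}{12}$ is precisely cancelled by the constant $-\frac{m}{2}\cdot\frac{1}{6}=-\frac{m}{12}$ that appears inside the Bernoulli expression from $\hat{\eta}$. After this cancellation, the remaining terms match (\ref{indthm2}) verbatim. If anything is likely to demand care, it is bookkeeping: matching the signs and the factor of $1/(2\pi i)$ coming from the normalization comment in the statement of (\ref{ind1}), and keeping track of the decomposition $E=\oplus_{j}\pi^*W(j)$ so that the diagonal trace $\tr_E$ interacts correctly with the block-diagonal $\Lambda=\mathrm{Diag}(\lambda_j)$ that enters $\{\Lambda\}$. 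Since both have been settled in the preceding subsection, the proof is essentially an assembly and the $\frac{m}{12}$ cancellation is the only genuinely satisfying point.
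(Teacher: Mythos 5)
Your proposal is correct and follows exactly the route the paper takes: the paper's entire derivation of this theorem is the substitution of the curvature lemma and of (\ref{punchline}) into (\ref{indthm}), with the $\frac{m}{12}$ bulk term cancelling against the $-\frac{m}{2}\cdot\frac{1}{6}$ constant from the Bernoulli polynomial, precisely as you write. Your version merely spells out the arithmetic that the paper leaves implicit in the phrase ``Going back to (\ref{indthm}) we deduce the result.''
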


\end{document}